\titleformat{\section}[block]{\centering\bfseries}{\thesection.}{1em}{}
\titlespacing*{\section}{36pt}{6pt}{6pt}
\titleformat{\subsection}[block]{\centering\bfseries}{\thesection.}{1em}{}
\titlespacing*{\subsection}{24pt}{6pt}{6pt}
\newtheoremstyle{theorem}{10pt}{10pt}{\sl}{\parindent}{\bf}{. }{ }{}
\theoremstyle{theorem}
\newtheorem{theorem}{Theorem}[section]
\newtheorem{lemma}[theorem]{Lemma}
\newtheorem*{Zlc}{Zalcman's Lemma}
\newtheoremstyle{defi}{10pt}{10pt}{\rm}{\parindent}{\bf}{. }{ }{}
\theoremstyle{defi}
\newtheorem{definition}[theorem]{Definition}
\newtheorem{example}[theorem]{Example}
\numberwithin{equation}{section}
\date{}
\newcommand{\fr}{\mathcal{F}}
\newcommand{\C}{\mathbb{C}}
\newcommand{\N}{\mathbb{N}}
\begin{document}
\title{\normalsize\bf Quasi-normality and Shared Functions}
\author{\footnotesize  Gopal DATT  \\
\footnotesize  Department of Mathematics,\\[-2pt]
\footnotesize  University of Delhi, Delhi 110007, India\\[-2pt]
\footnotesize  e-mail: ggopal.datt@gmail.com}
\maketitle
\begin{center}
\begin{minipage}{\textwidth}\fontsize{10}{10}\selectfont
\textbf{Abstract:} We prove two sufficient conditions of quasi-normality in which  each pair $f$ and $g$ of  $\fr$  shares some holomorphic functions.\\
\vskip.5em
{\bf Key Words:} {Meromorphic functions, shared values, normal families.}
\vskip.5em
{\bf 2010 Mathematics Subject Classification:} { 30D45.}
\end{minipage}
\end{center}

\jot6pt
\vskip3em
\thispagestyle{empty}
\baselineskip14pt\fontsize{11}{14}\selectfont
\section{Introduction}
Inspired by the heuristic principle, attributed to Bloch, Schwick \cite{Sch 92} discovered a connection between normality and shared values. Since then many researchers have given various sufficient conditions of normality using shared values. But a less number of articles are there in support of quasi-normal family. In our best knowledge there are a few results connecting quasi-normality and shared values. In this article we prove a quasi-normality criterion concerning shared functions. In 1975, Zalcman \cite{Zalc 1} proved a remarkable  result, now known as Zalcman's Lemma, for  families of meromorphic functions which are not normal in a domain. Roughly speaking, it says that {\it a non-normal family can be rescaled at small scale to obtain a non-constant meromorphic function in the limit.} This result of Zalcman gave birth to many new normality criteria.
\begin{Zlc} A family $\mathcal F$ of functions meromorphic (analytic) on the unit disc $\Delta$ is not normal if and only if there exist
\begin{enumerate}
\item[$(a)$]{a number $0<r<1$}
\item[$(b)$]{points $z_j, |z_j|<r$}
\item[$(c)$]{functions $\{f_j\}\subseteq \mathcal F$}
\item[$(d)$]{numbers $\rho_j\rightarrow0^+$}
 \end{enumerate}
such that
\begin{equation}\notag
f_j(z_j+\rho_j\zeta)\rightarrow g(\zeta)
\end{equation}
spherically uniformly (uniformly) on compact subsets of $\C$, where $g$ is a non-constant meromorphic (entire) function on $\C$. \end{Zlc}

Recently, Datt and Kumar proved a normality criterion for a family of holomorphic functions in a domain concerning shared holomorphic functions.
\begin{theorem}[\cite{Datt Kumar 15}]\label{Ind J PAM 1}
  Let $\fr$ be a family of holomorphic functions on a  domain  ${D}$ such that all zeros of $f\in \fr$ are of multiplicity at least $k$, where $k$ is a positive integer. Let $a, b$ be holomorphic functions in ${D}$. If  for each $f\in \fr$,
\begin{enumerate}
\item{ $  b(z)\neq 0,  \text{ for all\ } z\in  D,$}
\item{    $ a(z)\neq b(z)$, \text{ and\ } {$b(z)-L_k(a(z))\neq 0,$} }
\item{ {$f(z)=a(z) \text{ if and only if \ } L_k(f(z))=a(z),$}}
\item{ {$ f(z)=b(z) \text { whenever \ } L_k( f(z))=b(z)$, }}
\end{enumerate}where  $L_k(f(z)):=a_k(z)f^{(k)}(z)+a_{k-1}(z)f^{(k-1)}(z)+\ldots+a_1(z)f'(z)+a_0(z)f(z)$.
Then $\fr$ is normal in $D.$
\end{theorem}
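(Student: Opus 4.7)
The plan is to argue by contradiction: assume $\fr$ is not normal at some $z_0\in D$, apply Zalcman's Lemma to extract a nonconstant entire limit $g$, translate the sharing hypotheses into rigid properties of $g$, and finally invoke a Hayman-type theorem to reach a contradiction.

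By Zalcman's Lemma there exist $f_j\in\fr$, $z_j\to z_0$, and $\rho_j\to 0^+$ such that $g_j(\zeta):=f_j(z_j+\rho_j\zeta)\to g(\zeta)$ locally uniformly on $\C$, where $g$ is a nonconstant entire function. Since the zeros of each $f_j$ have multiplicity at least $k$, Hurwitz's theorem ensures the same for $g$.

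The heart of the argument is the following translation. The scaling identity
$$\rho_j^{\,k}\,L_k(f_j)(z_j+\rho_j\zeta)=\sum_{i=0}^{k}a_i(z_j+\rho_j\zeta)\,\rho_j^{\,k-i}\,g_j^{(i)}(\zeta)\longrightarrow a_k(z_0)\,g^{(k)}(\zeta),$$
together with $\rho_j^{\,k}a(z_j+\rho_j\zeta)\to 0$ and $\rho_j^{\,k}b(z_j+\rho_j\zeta)\to 0$, permits Hurwitz's theorem to be applied on both sides of conditions (3) and (4). Assuming $a_k(z_0)\ne 0$ and noting that $g^{(k)}\not\equiv 0$ (otherwise $g$ would be a polynomial of degree $<k$ all of whose zeros must have multiplicity $\ge k$, forcing $g$ to be constant), condition (3) implies that the $a(z_0)$-points of $g$ coincide with the zeros of $g^{(k)}$, and condition (4) implies that every zero of $g^{(k)}$ is a $b(z_0)$-point of $g$. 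Since $a(z_0)\ne b(z_0)$ by (2), these two point-sets are disjoint, so both are empty: $g$ omits the value $a(z_0)$ and $g^{(k)}$ is nowhere zero on $\C$.

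For the endgame, set $G:=g-a(z_0)$. Then $G$ is entire, nowhere zero, with $G^{(k)}=g^{(k)}$ also nowhere zero; a classical Hayman-type rigidity result forces $G(\zeta)=e^{\al\zeta+\be}$ with $\al\ne 0$, so that $g(\zeta)=a(z_0)+e^{\al\zeta+\be}$. A direct calculation of $g'$ shows that every zero of this $g$ is simple, contradicting the multiplicity $\ge k$ hypothesis as soon as $k\ge 2$. The residual cases ($k=1$, or $a(z_0)=0$ so that $g$ has no zeros at all) are handled by invoking the second half of condition (2), $b-L_k(a)\ne 0$, combined with Picard's theorem forcing $g$ to attain the value $b(z_0)$ (since it already omits $a(z_0)$); a refined comparison of $L_k(f_j)$ with $L_k(a)$ near these $b(z_0)$-points yields the final contradiction.

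The principal obstacle is the calibrated rescaling in the middle step: the exponent $\rho_j^{\,k}$ is chosen precisely so that $\rho_j^{\,k}L_k(f_j)$ has the explicit limit $a_k(z_0)\,g^{(k)}$, and verifying the non-triviality of this limit uses both the multiplicity hypothesis and the non-constancy of $g$. The case $k=1$ is the most delicate point, since Hayman-type rigidity alone still admits a one-parameter family of candidate limits, and narrowing it down requires condition (2) in its full strength.
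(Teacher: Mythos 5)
First, a caveat: the paper you are working from does not prove this statement at all --- it is quoted verbatim from \cite{Datt Kumar 15} and used as a black box --- so there is no in-paper proof to compare against; your attempt has to be judged on its own terms.

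On those terms, the first two thirds of your argument are sound: the plain Zalcman rescaling, the computation $\rho_j^{\,k}L_k(f_j)(z_j+\rho_j\zeta)\to a_k(z_0)g^{(k)}(\zeta)$, and the two-sided Hurwitz argument correctly yield that the $a(z_0)$-points of $g$ coincide with the zeros of $g^{(k)}$, that every zero of $g^{(k)}$ is also a $b(z_0)$-point, and hence (since $a(z_0)\neq b(z_0)$) that $g$ omits $a(z_0)$ and $g^{(k)}$ is zero-free. The genuine gap is the endgame. For $k\geq 2$ with $a(z_0)\neq 0$ your Hayman rigidity argument closes the proof, but the ``residual cases'' you defer are not residual and are not actually handled. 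When $k=1$ there is no rigidity at all: $g(\zeta)=a(z_0)+e^{e^{\zeta}}$ satisfies every constraint you have extracted ($g\neq a(z_0)$, $g'\neq 0$, no multiplicity restriction), so no contradiction is available. When $k\geq2$ and $a(z_0)=0$, the function $g=e^{\alpha\zeta+\beta}$ likewise survives. Your proposed fix --- Picard gives a $b(z_0)$-point of $g$, then ``compare $L_k(f_j)$ with $L_k(a)$ near it'' --- cannot work, because condition (4) is a one-way implication: finding points where $f_j=b$ tells you nothing about $L_k(f_j)$ there, and inequalities such as $L_k(f_j)\neq a$ do not pass to the limit under Hurwitz.

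The structural symptom is that your argument never uses the hypotheses $b(z)\neq 0$ and $b(z)-L_k(a(z))\neq 0$; the rescaling $f_j(z_j+\rho_j\zeta)$ discards exactly the information those hypotheses encode. The standard way to retain it is the Zalcman--Pang lemma with exponent $\alpha=k$ (available precisely because all zeros of $f_j$ have multiplicity at least $k$): setting $g_j(\zeta)=\rho_j^{-k}f_j(z_j+\rho_j\zeta)\to g$, one gets $L_k(f_j)(z_j+\rho_j\zeta)\to a_k(z_0)\,g^{(k)}(\zeta)$ \emph{without} any compensating factor, while $f_j(z_j+\rho_j\zeta)=\rho_j^{k}g_j(\zeta)\to 0$. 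Then a zero of $a_k(z_0)g^{(k)}-b(z_0)$ would produce, via Hurwitz and condition (4), points where $f_j=b$ with $f_j\to 0$ and $b\to b(z_0)\neq 0$ --- a contradiction --- so $g^{(k)}$ omits the nonzero value $b(z_0)/a_k(z_0)$, and the proof is finished by Hayman's alternative for $g^{(k)}$ together with the multiplicity hypothesis and the remaining conditions (this is where $b-L_k(a)\neq0$ enters, to exclude the degenerate polynomial limits $g=A(\zeta-\zeta_1)^k$). Without some such recalibration, your proof as written does not establish the theorem for $k=1$ or for points where $a$ vanishes.
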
 
We prove a quasi-normality criterion related to the theorem of Datt and Kumar (cf. Theorem \ref{qthm}).
 \section{Basic Notions and Main Results}
The notion of quasi-normal families was introduced by Montel in 1922 \cite{Montel 22}.
The concept of quasi-normality is not studied as much as the concept of normal families. Let us recall the definition  of quasi-normal family.
\begin{definition}[\cite{Schiff}]
A  family  $\fr$ of meromorphic (analytic) functions on a domain $D$ is called quasi-normal on $D$ if every sequence $\{f_n\}$ of functions in $\fr$ has a subsequence which converges  uniformly  on compact subsets of $D\setminus E$ with respect to the spherical (Euclidean) metric, where the set  $E$ has no accumulation points in $D$. If the set $E=\emptyset$ we say $\fr$ is normal on $D$.
\end{definition}
If the set $E$ can always be taken to satisfy $|E|\leq q$ then we say $\fr$ is  quasi-normal of order $q$ in $D$. $\fr$ fails to be quasi-normal of order $q$ in $D$ when there exists a set $E$ such that $|E|\geq q+1.$  Thus a family $\fr$ is normal in $D$ if and only if it is quasi-normal of order $0$ in $D$.
\begin{example}Let $\fr:=\{nz: z\in\C, n\in \N\}$. This family is not normal in any domain of $\C$ which contains $0$. Also, for each domain in $\C\setminus\{0\}$ this family is normal.Therefore, $\fr$ is a quasi-normal family of order $1$ in any domain which contains $0$.
\end{example}

This notion was further extended by Chuang in an inductive fashion as $Q_m-$normality \cite{Chuang 93}. He used the concept of $m-$th order derived set to define $Q_m-$normality.
\begin{definition}[\cite{Chuang 93}]
Let $D$ be a plane domain and $E\subset D$. Then the set of all accumulation points of $\fr$ in $D$ is called the derived set of order $1$ in $D$ and denoted by $E_D^{(1)}.$ For every $m\geq 2$, the $m-$th order derived set $E_D^{(m)}$ is defined by  $E_D^{(m)}=\left(E_D^{(m-1)}\right)_D^{(1)}$.
\end{definition}
\begin{definition}[\cite{Chuang 93}]
A family $\fr$ of meromorphic (holomorphic) functions on a domain $D$ is $Q_m-$normal ($m=0, 1, 2, \ldots$) if each sequence $\{f_n\}$ of functions in $\fr$ has a subsequence which converges  uniformly  on compact subsets of $D\setminus E$ with respect to the spherical (Euclidean) metric, where the set  $E$ satisfies $E_D^{(m)}=\emptyset.$
 \end{definition}
It follows from the definition that a $Q_0-$normal family is a normal family and a $Q_1-$normal family is a quasi-normal family. If the set $E$ can always be taken to satisfy $|E_D^{(m)}|\leq q$ then we say $\fr$ is  $Q_m-$normal of order $q$ in $D$.     This notion of $Q_m-$normality was further studied by Nevo \cite{Nevo 01, Nevo 05}.

The theory of normal families is much studied by  shared values and functions. This is not the case with quasi-normlaity. To the best of our knowledge there is only one result on quasi-normality and shared values due to Nevo \cite{Nevo 05}, which is as follows.
\begin{theorem}[\cite{Nevo 05}]\label{Nevo 05}
  Let $\fr$ be a family of functions meromorphic on  a domain $D$. Let  $q\geq 1$ be an integer and $a$, $b$ be two distinct complex numbers. Assume that for any $f\in \fr$ there exist points $z_1^{(f)}, z_2^{(f)}, \ldots, z_{n_f}^{(f)}, n_f\leq q,$ belonging to $D$ such that $f$ and $f'$ share $a$ and $b$ in $D\setminus\{z_1^{(f)}, z_2^{(f)}, \ldots, z_{n_f}^{(f)}\}$. Then $\fr$ is a quasi-normal family of order at most $q$ in $D$.
\end{theorem}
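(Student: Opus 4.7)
My plan is to argue by contradiction, along the classical lines that Nevo uses in his original paper. Suppose $\fr$ is not quasi-normal of order $q$ in $D$. Then there exists a sequence $\{f_j\}\subseteq \fr$ no subsequence of which converges spherically locally uniformly on $D\setminus E$ for any set $E$ with $|E|\leq q$. For each $j$, write $E_j:=\{z_1^{(f_j)},\ldots,z_{n_{f_j}}^{(f_j)}\}$, so that $|E_j|\leq q$. I first extract a subsequence along which $|E_j|$ equals a constant $k\leq q$, and then, using the precompactness of the one-point compactification of $D$, pass to a further subsequence so that each of the $k$ sequences $\{z_i^{(f_j)}\}_j$ converges to some $\zeta_i\in \overline D\cup\{\infty\}$. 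Set $\widetilde E:=\{\zeta_1,\ldots,\zeta_k\}\cap D$; clearly $|\widetilde E|\leq k\leq q$.

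Next I work on the open set $D^{*}:=D\setminus\widetilde E$. Given a compact set $K\subset D^{*}$, I choose an open $U$ with $K\subset U$, $\overline U\subset D$ and $\overline U\cap \widetilde E=\emptyset$. By the definition of $\widetilde E$, one has $E_j\cap \overline U=\emptyset$ for all sufficiently large $j$, so on $U$ each such $f_j$ and $f_j'$ share the two distinct values $a$ and $b$. The main ingredient I would invoke at this stage is the classical normality criterion for value-sharing with the derivative in the meromorphic setting: \emph{a family of meromorphic functions on a plane domain, each member of which shares two distinct finite values with its first derivative, is normal on that domain.} Applied to the tail $\{f_j\}_{j\gg 1}$ on $U$, it produces a subsequence converging spherically uniformly on $K$.

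A standard diagonal extraction along a compact exhaustion of $D^{*}$ then yields a single subsequence of $\{f_j\}$ converging spherically locally uniformly on $D^{*}=D\setminus\widetilde E$. Since $|\widetilde E|\leq q$, this contradicts the way $\{f_j\}$ was chosen, and hence $\fr$ is quasi-normal of order at most $q$ in $D$.

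The main obstacle I anticipate is supplying the normality input used in the second paragraph. If one wants a self-contained proof rather than a citation, the natural route is a Zalcman rescaling $f_j(z_j+\rho_j\zeta)\to g(\zeta)$ at a hypothetical non-normal point in $U$, producing a non-constant meromorphic limit $g$ on $\C$ whose derivative shares $a$ and $b$ with $g$; one then rules out such a $g$ by rigidity results of Mues--Steinmetz type (or directly by Nevanlinna's second main theorem applied to $g$ and $g'$). The remaining bookkeeping---stabilising $|E_j|$, locating the limits $\zeta_i$, and the diagonal argument---is routine, so the conceptual heart of the proof is really the reduction to this two-value sharing normality theorem.
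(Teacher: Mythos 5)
The paper does not actually prove this statement: Theorem \ref{Nevo 05} is quoted from Nevo's article \cite{Nevo 05} purely as motivation, so there is no in-paper proof to match yours against. That said, your argument is exactly the scheme the author does use for his own Theorem \ref{qthm}: pass to a subsequence along which the number of exceptional points is constant and the points themselves converge, let $\widetilde E$ be the set of limit points lying in $D$, observe that any compact subset of $D\setminus\widetilde E$ eventually avoids all exceptional points so that the underlying ``no exceptional points'' normality criterion applies there, and then diagonalize over a compact exhaustion. Your bookkeeping (stabilising $|E_j|$, the compactness argument giving $E_j\cap\overline U=\emptyset$ for large $j$, the diagonal extraction) is correct, and the reduction to the two-value sharing normality criterion for $f$ and $f'$ (Pang--Zalcman, Ark.\ Mat.\ \textbf{38} (2000)) is legitimate and is indeed how Nevo's quasi-normal statement is obtained from the normal one. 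If you rest on that citation, the proof is complete.

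The one genuine problem is in your fallback sketch for proving the normality input yourself. A plain Zalcman rescaling $g_j(\xi)=f_j(z_j+\rho_j\xi)\to g(\xi)$ does \emph{not} yield a limit $g$ such that $g$ and $g'$ share $a$ and $b$: since $g_j'(\xi)=\rho_j f_j'(z_j+\rho_j\xi)$, the sharing hypothesis $f_j=a\Leftrightarrow f_j'=a$ only transfers to the degenerate statement that every $a$-point and every $b$-point of $g$ is a multiple point (from $g(\xi_0)=a$ one gets, via Hurwitz, points where $f_j'=a$, hence $g_j'=\rho_j a\to 0$, so $g'(\xi_0)=0$; the converse direction gives nothing usable). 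That conclusion does not force $g$ to be constant --- $g=\sin$ is a non-constant entire function all of whose $1$-points and $(-1)$-points are multiple --- so neither Mues--Steinmetz rigidity nor the second main theorem applied to $g$ and $g'$ can be invoked as you describe. The actual proof of the two-value criterion requires a more delicate rescaling (normalising $\rho_j^{-1}(f_j(z_j+\rho_j\xi)-a)$ and the like). So: keep the citation as the load-bearing step, and either drop the self-contained sketch or rework it along the Pang--Zalcman lines.
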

It is natural to ask whether we can give a quasi-normality criterion concerning shared functions. Here we propose a quasi-normality criterion for a family of holomorphic functions which extends Theorem \ref{Nevo 05} in some extent. We replace  values $a,  b$  by holomorphic functions $a(z),  b(z)$  respectively and $f'$ by a linear differential polynomial  $a_k(z)f^{(k)}(z)+a_{k-1}(z)f^{(k-1)}(z)+\ldots+a_1(z)f'(z)+a_0(z) f(z)$, where  $a_0(z),\ldots  a_k(z) $ are  holomorphic functions,  with $a_k(z)\neq 0 \ \text{in\ } {D}$. We define $$L_k(f(z)):=a_k(z)f^{(k)}(z)+a_{k-1}(z)f^{(k-1)}(z)+\ldots+a_1(z)f'(z)+a_0(z)f(z).$$  We propose the following result:
\begin{theorem}\label{qthm}
  Let $\mathcal F$ be a family of holomorphic functions on a  domain  ${D}$ such that all zeros of $f\in \fr$ are of multiplicity at least $k$, where $k$ is a positive integer. Let $a,\ b $ be holomorphic functions in ${D}$ such that \begin{enumerate}
            \item{ $  b(z)\neq 0,  \text{ for all\ } z\in  D,$}
\item{    $ a(z)\neq b(z)$ \text{ and\ } {$b(z)-L_k(a(z))\neq 0.$} }
          \end{enumerate}
           If  for each $f\in \fr$ there exist points $z_1^{(f)}, z_2^{(f)}, \ldots, z_{N_f}^{(f)}, N_f\leq q,$ in $D$ such that for all $z \in D\setminus \{z_1^{(f)}, z_2^{(f)}, \ldots, z_{N_f}^{(f)}\}$
\begin{enumerate}
\item[(i)]{ {$f(z)=a(z) \text{ if and only if \ } L_k(f(z))=a(z),$}}
\item[(ii)]{ {$ f(z)=b(z) \text { whenever \ } L_k( f(z))=b(z)$. }}
\end{enumerate}
Then $\fr$ is a quasi-normal family of order at most $q$ in $D.$
\end{theorem}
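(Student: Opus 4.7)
The plan is to argue by contradiction, reducing to the normality criterion of Datt and Kumar (Theorem \ref{Ind J PAM 1}). Suppose $\fr$ is not quasi-normal of order $q$ in $D$, and let $\{f_n\}\subset\fr$ be a witnessing sequence, meaning no subsequence of $\{f_n\}$ converges locally uniformly on $D\setminus E$ for any $E\subset D$ with $|E|\leq q$. Let $S$ denote the set of points of $D$ at which $\{f_n\}$ fails to be normal. Then $|S|\geq q+1$: if $S$ were finite with $|S|\leq q$, a standard diagonal extraction over an exhaustion of the open set $D\setminus S$ by compacta would yield a subsequence of $\{f_n\}$ converging locally uniformly on $D\setminus S$, contradicting the choice of $\{f_n\}$. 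Fix $q+1$ distinct points $w_1,\ldots,w_{q+1}\in S$.

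The core of the argument is a pigeonhole on the exceptional sets. For each $n$ set $E_n:=\{z_1^{(f_n)},\ldots,z_{N_{f_n}}^{(f_n)}\}\subset D$, so $|E_n|\leq q$. Pick pairwise disjoint open disks $\Delta_1,\ldots,\Delta_{q+1}$ with $w_i\in\Delta_i\subset D$. Since the $\Delta_i$ are disjoint and $|E_n|\leq q$, for every $n$ there is some $i(n)\in\{1,\ldots,q+1\}$ with $E_n\cap\Delta_{i(n)}=\emptyset$. By the usual pigeonhole some fixed index—say $i=1$—is chosen for infinitely many $n$; restricting to that subsequence and continuing to write it as $\{f_n\}$, we have $E_n\cap\Delta_1=\emptyset$ for every $n$.

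On the disk $\Delta_1$ all hypotheses of Theorem \ref{Ind J PAM 1} are satisfied by the restricted family $\{f_n\}$: zeros of each $f_n$ have multiplicity at least $k$, the holomorphic functions $a,\ b$ and coefficients of $L_k$ obey the required nonvanishing and separation conditions on $\Delta_1\subset D$, and—crucially—conditions (i) and (ii) now hold at \emph{every} point of $\Delta_1$ because $E_n\cap\Delta_1=\emptyset$. Theorem \ref{Ind J PAM 1} then forces $\{f_n\}$ to be normal on $\Delta_1$, which contradicts the non-normality of $\{f_n\}$ at $w_1\in\Delta_1$.

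The only step I expect to require genuine care is the first: translating the abstract failure of quasi-normality of order $q$ into the concrete statement that a witnessing sequence admits at least $q+1$ non-normal points in $D$, including handling the case where the non-normal locus accumulates in $D$ (in which case $q+1$ such points are immediately available). After that, the pigeonhole is elementary and the invocation of Theorem \ref{Ind J PAM 1} is automatic.
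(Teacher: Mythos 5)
Your reduction to Theorem \ref{Ind J PAM 1} and the pigeonhole on the exceptional sets are both sound, but the final step contains a genuine gap. After the pigeonhole you have replaced $\{f_n\}$ by a subsequence, and Theorem \ref{Ind J PAM 1} shows that this \emph{subsequence} is normal on $\Delta_1$. That does not contradict $w_1\in S$, because $S$ was defined as the non-normal set of the \emph{original} sequence, and non-normality at a point is not inherited by subsequences: a sequence can fail to be normal at $w_1$ while some subsequence of it is perfectly normal there. The natural repair --- observing that the extracted subsequence is again a witnessing sequence, hence has its own non-normal set $S'$ with $|S'|\geq q+1$ and $S'\cap\Delta_1=\emptyset$ --- merely restarts the argument with a fresh collection of $q+1$ points; since these non-normal sets may be infinite, the iteration need not terminate and the contradiction is never reached. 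The pigeonhole clears one fixed disk for one subsequence, which is too little.

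The paper closes exactly this gap with a compactness step that your argument is missing: after normalizing $N_{f_n}=N\leq q$, it passes to a subsequence along which each exceptional point converges, $z_j^{(f_n)}\rightarrow\zeta_j\in\overline{D}$ for $1\leq j\leq N$. For that subsequence, \emph{every} point $\zeta_0\in D\setminus\{\zeta_1,\ldots,\zeta_N\}$ admits a closed disk that eventually contains no exceptional points, so Theorem \ref{Ind J PAM 1} applies near every such $\zeta_0$ and the subsequence is normal on all of $D\setminus\{\zeta_1,\ldots,\zeta_N\}$, a set omitting at most $q$ points of $D$. A diagonal argument then yields a further subsequence converging locally uniformly off at most $q$ points, which establishes quasi-normality of order at most $q$ directly, with no contradiction argument needed. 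If you replace your pigeonhole by this extraction of convergent exceptional points, the remainder of your write-up (the local invocation of Theorem \ref{Ind J PAM 1}, and the diagonal argument you already use to prove $|S|\geq q+1$) goes through.
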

\begin{proof}
  Let $\{f_n\}$ be  a sequence in $\fr$. Without loss of generality we assume that $N_{f_n}=N$, $0\leq N\leq q.$  If $N=0$, then $\fr$ is normal and hence quasi-normal of order $0$, by Theorem \ref{Ind J PAM 1}. For $N\geq 1$, we have for each $n$, $f_n$ and $L_k(f_n)$ share $a(z)$ and  $ f_n(z)=b(z) \text { whenever \ } L_k( f_n(z))=b(z)$ in the domain $D\setminus \{\zeta_1^{(n)}, \zeta_2^{(n)}, \ldots, \zeta_{N}^{(n)}\}.$ We also  assume that as $k\rightarrow\infty,$ $\zeta_j^{(k)}\rightarrow\zeta_j\in \overline{D},$ for  $1\leq j\leq N$. \\

  Now, we claim that $\{f_n\}$ converges compactly on $D^*=D\setminus \{\zeta_1, \zeta_2, \ldots, \zeta_{N}\}.$ Take a point $\zeta_0\in D\setminus\{\zeta_1, \zeta_2, \ldots, \zeta_{N}\}$ and $r>0$ such that $\overline{D}(\zeta_0, r)=\{z: |z-\zeta_0|\leq r\}\subset D^*.$ For large values of $n$ we have $\zeta_j^{(n)}\not\in D(\zeta_0, r)$ for $1\leq j\leq N.$ Hence $\{f_n\}$ converges compactly on $D(\zeta_0, r)$, by Theorem \ref{Ind J PAM 1}. Thus we deduce that $\fr$ is normal in $D^*$. Since the set of irregular points is of cardinality  at most $q$ therefore by the definition, $\fr$ is a quasi-normal family of order at most $q$ in $D$.\end{proof}
The following example illustrates Theorem \ref{qthm}.
\begin{example}\label{3.3}
  Let $k=1$ and $D=\{z:|z|<1\}$. Let $a(z)=z^{2}$, $b(z)=z+2$ be holomorphic functions such that $a(z)\neq b(z)$, $b(z)\neq 0$ in $D$. Define $D_1(f)= f'(z)$, then $b(z)\neq D_1(a)$ in $D$. Consider the family $\fr$ of holomorphic function in $D$ given by $\fr=\{(n+1)^2z: z\in D, n= 1, 2, \ldots\}.$ For each function $f_n\in\fr$, there is a point $\frac{2}{(n+1)^2-1}$ so that $f_n$ satisfies condition (i) and (ii) of  the theorem in $D\setminus\left\{\frac{2}{(n+1)^2-1}\right\}$. And $\fr$ is  a quasi-normal family of order 1.
\end{example}If we replace $D$ by $D\setminus\{0\}$ in  Example \ref{3.3}, then $\fr$ satisfies all the conditions and $\fr$ is a normal family in $D\setminus\{0\}$, which means $\fr$ is a quasi-normal family of order 0 in $D\setminus\{0\}$.

In 1999, Fang and Hong extended Montel's normality test using the concept of shared set \cite{Fang Hong 99}. They proposed the following Theorem.
\begin{theorem}[\cite{Fang Hong 99}]\label{Fang Hong 99}
 Let $\fr$ be a family of functions meromorphic on  a domain $D$. If each pair $f$ and $g$ in $\fr$  shares the set $S=\{0, 1, \infty\}$ then $\fr$ is normal on $D$.
\end{theorem}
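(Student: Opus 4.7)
The plan is to argue by contradiction via Zalcman's lemma. Suppose $\fr$ fails to be normal at some $z_0 \in D$. Then Zalcman's lemma supplies $f_j \in \fr$, points $z_j \to z_0$ and scales $\rho_j \to 0^+$ such that
\[
g_j(\zeta) := f_j(z_j + \rho_j\zeta) \to g(\zeta)
\]
spherically uniformly on compact subsets of $\C$, with $g$ a non-constant meromorphic function on $\C$. The strategy is to pin down the possible zeros, $1$-points and poles of $g$ and then invoke Picard's theorems to force a contradiction.

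For the localization step, pick any non-constant reference $f_0 \in \fr$ (if $\fr$ contains only constants it is trivially normal). The preimage $f_0^{-1}(\{0,1,\infty\})$ is discrete in $D$, so I may choose $\delta > 0$ with $f_0^{-1}(\{0,1,\infty\}) \cap D(z_0,\delta) \subseteq \{z_0\}$. Because every $f_j$ shares $\{0,1,\infty\}$ with $f_0$, each $f_j$ omits $\{0,1,\infty\}$ on the punctured disk $D(z_0,\delta) \setminus \{z_0\}$. Setting $\zeta_j^\ast := (z_0 - z_j)/\rho_j$ and, after passing to a subsequence, $\zeta_j^\ast \to \zeta_0$ on the Riemann sphere, I claim $g$ omits $\{0,1,\infty\}$ on $\C \setminus \{\zeta_0\}$: for any compact $K \subset \C \setminus \{\zeta_0\}$ the points $z_j + \rho_j\zeta$ with $\zeta \in K$ eventually lie in $D(z_0,\delta) \setminus \{z_0\}$, so the $g_j$ omit $\{0,1,\infty\}$ on $K$, and Hurwitz's theorem together with the non-constancy of $g$ transfers the omission to $g$.

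To finish it suffices to show that no non-constant meromorphic function on $\C$ can take values in $\{0,1,\infty\}$ only at a single point. If $\zeta_0 = \infty$, or if $\zeta_0 \in \C$ with $g(\zeta_0) \notin \{0,1,\infty\}$, then $g$ omits all three values on $\C$, contradicting the meromorphic form of Picard's little theorem. Otherwise, by composing $g$ with a Möbius transformation permuting $\{0,1,\infty\}$ that sends $g(\zeta_0)$ to $\infty$ (one of the identity, $w \mapsto 1/w$, or $w \mapsto w/(w-1)$), I may assume $g(\zeta_0) = \infty$. Then $g$ has its only pole at $\zeta_0$ and omits $0$ and $1$ on all of $\C$. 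Consider $F := 1/(g-1)$: it is entire (as $g \ne 1$), has $F^{-1}(0) = \{\zeta_0\}$, and omits the value $-1$ (since $g \ne 0$). A non-constant polynomial cannot omit a value by the fundamental theorem of algebra, so $F$ must be transcendental; Picard's great theorem then forces $F$ to attain the value $0$ infinitely often, contradicting $F^{-1}(0) = \{\zeta_0\}$.

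The step I expect to be the main obstacle is this final reduction: one has to identify a single algebraic combination of $g$ (here $1/(g-1)$) that bundles together ``no zeros'', ``no $1$-points'' and ``unique pole at $\zeta_0$'' into one entire function on which Picard's theorems can be applied cleanly, and one must be careful with the Möbius normalisation so that the information coming from the Zalcman limit survives unchanged.
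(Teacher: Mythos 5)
Your argument is correct, but note that the paper itself contains no proof of this statement: Theorem \ref{Fang Hong 99} is quoted from Fang and Hong \cite{Fang Hong 99} as motivation, so the only in-paper point of comparison is the proof of the analogous Theorem \ref{Main2 AnnPolo}. That proof runs the same Zalcman--Hurwitz--Picard scheme, but only at points outside the common preimage set $X=\bigcup_{j}\{z: f(z)=a_j(z)\}$, which is all that is needed there since the conclusion is only quasi-normality. Your proof supplies precisely the extra ingredient that upgrades the conclusion to full normality in the constant-value case: you run Zalcman at an arbitrary $z_0$, including points of $f_0^{-1}(\{0,1,\infty\})$, show via the shared-set hypothesis and Hurwitz that the limit $g$ can meet $\{0,1,\infty\}$ only at the single point $\zeta_0=\lim (z_0-z_j)/\rho_j$, and then exclude this with the great Picard theorem applied to the entire function $F=1/(g-1)$ after a M\"obius permutation of $\{0,1,\infty\}$. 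The individual steps all check out: discreteness of $f_0^{-1}(\{0,1,\infty\})$ for non-constant $f_0$; the transfer of the omission to $g$ on $\C\setminus\{\zeta_0\}$ (non-constancy of $g$ rules out $g\equiv 0,1,\infty$ in Hurwitz); the exhaustive case split on $g(\zeta_0)$; and the final contradiction, since $F$ is forced to be transcendental (a non-constant polynomial omits no value) yet attains both $-1$ and $0$ only finitely often. Two minor remarks: you invoke the pointwise refinement of Zalcman's lemma ($z_j\to z_0$ when normality fails at $z_0$), which is standard but stronger than the version stated in the paper, and the degenerate situations ($\fr$ consisting of constants, or $f_0^{-1}(\{0,1,\infty\})=\emptyset$, where Montel's fundamental normality test applies directly) are indeed covered by your argument as written. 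An alternative, more quantitative finish common in the literature replaces your great Picard step with the second fundamental theorem (or Riemann--Hurwitz for rational $g$) to show no non-constant meromorphic function on $\C$ has all its $\{0,1,\infty\}$-points confined to one point; your route is more elementary.
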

It is a natural question to ask whether one can replace values by holomorphic functions on $S$. The following example confirms that the normality will no longer be assured if we replace values by holomorphic functions on $S$.
\begin{example}
  Let $D=\{z: |z|<1\}$ and $\fr=\{(n+3)z: n\in\N\}$, clearly each pair $f$ and $g$ of $\fr$ shares $a_1(z)=z$, $a_2(z)=2z$ and $a_3(z)=3z$ but $\fr$ is not normal in $D$.
\end{example}
Here we extend Theorem \ref{Fang Hong 99} using  techniques of shared functions.
\begin{theorem}\label{Main2 AnnPolo}
  Let $\fr$ be a family of functions meromorphic in a domain $D\subseteq\C$. Let $a_1, a_2$ and $a_3$ be three distinct holomorphic functions. If each pair $f$ and $g$  of $\fr$ shares $a_1, a_2$ and $a_3$, then $\fr$ is quasi-normal in $D$.
\end{theorem}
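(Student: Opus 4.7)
The plan is to reduce Theorem \ref{Main2 AnnPolo} to the Fang--Hong Theorem (Theorem \ref{Fang Hong 99}) by a $z$-dependent Möbius transformation that simultaneously carries $a_1(z),a_2(z),a_3(z)$ to the constant triple $0,1,\infty$. The crucial preliminary observation is that the set
$$
E:=\{z\in D : a_i(z)=a_j(z)\text{ for some }i\ne j\}
$$
is discrete in $D$. Indeed, each $a_i-a_j$ is a nonvanishing holomorphic function on $D$ (by distinctness) and hence has isolated zeros by the identity principle; $E$ is the union of three such discrete sets.

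On $D\setminus E$, I would introduce the holomorphic family of Möbius transformations
$$
T_z(w):=\frac{(w-a_1(z))(a_2(z)-a_3(z))}{(w-a_3(z))(a_2(z)-a_1(z))},
$$
so that $T_z(a_1(z))=0$, $T_z(a_2(z))=1$, $T_z(a_3(z))=\infty$. For each $f\in\fr$, set $\tilde f(z):=T_z(f(z))$. Since the coefficients of $T_z$ are holomorphic on $D\setminus E$, $\tilde f$ is meromorphic on $D\setminus E$; and the sharing hypothesis translates pointwise into $\tilde f^{-1}(\{0,1,\infty\})=\tilde g^{-1}(\{0,1,\infty\})$ for any $f,g\in\fr$. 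In other words, the family $\tilde\fr:=\{\tilde f:f\in\fr\}$ satisfies the hypothesis of Theorem \ref{Fang Hong 99} on $D\setminus E$, and is therefore normal there.

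The final step is to transfer normality back from $\tilde\fr$ to $\fr$. Given a sequence $\{f_n\}\subseteq\fr$, pass to a subsequence so that $\tilde f_n$ converges spherically-uniformly on compact subsets of $D\setminus E$. On any compact $K\subset D\setminus E$, the coefficients of $T_z^{-1}$ (which is itself a Möbius transformation depending holomorphically on $z$) are uniformly bounded and bounded away from the degenerate configuration, so $T_z^{-1}(w)$ is uniformly continuous in $(z,w)\in K\times\hat\C$ with respect to the spherical metric. Hence $f_n(z)=T_z^{-1}(\tilde f_n(z))$ converges spherically-uniformly on $K$, showing that $\fr$ is normal on $D\setminus E$. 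Since $E$ has no accumulation point in $D$, this is precisely the definition of quasi-normality of $\fr$ on $D$.

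The main technical obstacle is exactly the back-transfer of normality across the $z$-dependent Möbius change of coordinates: one must be careful to work throughout in the spherical metric and to verify joint continuity of $(z,w)\mapsto T_z^{-1}(w)$ on compact subsets of $(D\setminus E)\times\hat\C$. The construction of $T_z$ and the discreteness of $E$ are straightforward; everything else is then bookkeeping together with the invocation of Theorem \ref{Fang Hong 99}.
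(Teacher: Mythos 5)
Your proposal is correct, but it takes a genuinely different route from the paper. The paper's proof works on the complement of the \emph{common preimage set} $X=\bigcup_{j=1}^{3}\{z: f(z)=a_j(z)\}$, which is independent of $f$ by the sharing hypothesis and discrete (unless the family degenerates to a single $a_j$); on $D\setminus X$ every member of $\fr$ omits all three functions $a_1,a_2,a_3$, and normality there is obtained by a Zalcman--Hurwitz--Picard argument supplemented by the Chang--Fang--Zalcman lemmas (Lemmas \ref{ChangT 09} and \ref{ChangL 09}) to handle points where the $a_j(z_0)$ coincide, followed by a diagonal argument. You instead work on the complement of the \emph{collision set} $E=\{z: a_i(z)=a_j(z),\ i\neq j\}$, normalize by the $z$-dependent cross-ratio $T_z$ sending $(a_1,a_2,a_3)$ to $(0,1,\infty)$, and invoke Theorem \ref{Fang Hong 99} directly; the verification that $\tilde f^{-1}(0)=f^{-1}(a_1)$, $\tilde f^{-1}(1)=f^{-1}(a_2)$, $\tilde f^{-1}(\infty)=f^{-1}(a_3)$ on $D\setminus E$ (including the check that poles of $f$ go to $\tfrac{a_2-a_3}{a_2-a_1}\notin\{0,1,\infty\}$ there) is exactly right. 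Your approach buys two things: the exceptional set $E$ depends only on $a_1,a_2,a_3$ and not on the family, so in the case $E=\emptyset$ you get genuine normality on all of $D$, a conclusion the paper's argument does not directly give; and all the hard value-distribution theory is outsourced to Fang--Hong. The price is the back-transfer step, which you correctly flag and only sketch: one must check that $\{T_z^{-1}\}_{z\in K}$ is a compact, hence uniformly (bi-)Lipschitz, family of M\"obius transformations of $\widehat{\C}$ in the spherical metric for each compact $K\subset D\setminus E$ (the normalized coefficient matrix has determinant bounded away from $0$ on $K$). That is a standard fact, so I regard the proof as complete modulo writing out that estimate; only the phrase ``nonvanishing holomorphic function'' should read ``not identically vanishing.''
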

The following example elucidates Theorem \ref{Main2 AnnPolo}.
\begin{example}
  Let $D=\{z: |z|<1\}$ and $\fr=\{(2n+1)z: n\in\N\}$, clearly each pair $f$ and $g$ of $\fr$ shares $a_1(z)=z/2$, $a_2(z)=z$ and $a_3(z)=2z$ and $\fr$ is quasi-normal in $D$.
\end{example}
We use the following results in order to prove Theorem \ref{Main2 AnnPolo}.
\begin{lemma}[\cite{Chang 09}]\label{ChangT 09}
Let $\fr$ be a family of meromorphic functions on a plane domain $D$ and let $\alpha_1$, $\alpha_2$ and $\alpha_3$ be distinct meromorphic functions on $D$, one of which may be $\infty$ identically. If for each $f\in\fr$ and $z\in D$, $f(z)\neq \alpha_i(z),$ for all $i=1, 2, 3,$ then  $\fr$ is normal on $D$.
\end{lemma}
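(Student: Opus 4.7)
My strategy is to reduce Theorem~\ref{Main2 AnnPolo} to the three-function omission criterion, Lemma~\ref{ChangT 09}, by fixing one representative of $\fr$ and exploiting the pairwise sharing hypothesis to obtain global omission of $a_1,a_2,a_3$ on the complement of a discrete set.

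Concretely, I would pick an arbitrary $f_0\in\fr$ and set
\[
E := \bigcup_{i=1}^{3}\{z\in D : f_0(z)=a_i(z)\}.
\]
If $f_0\equiv a_i$ for some $i$, then sharing forces $\{z:f(z)=a_i(z)\}=D$ for every $f\in\fr$, so $f\equiv a_i$, and $\fr$ reduces to the singleton $\{a_i\}$, which is trivially normal. Otherwise each difference $f_0-a_i$ is a meromorphic function on $D$ that does not vanish identically; the three zero sets are therefore discrete in $D$, and consequently $E$ has no accumulation point in $D$.

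Next I would observe that on the open set $D^{*}:=D\setminus E$ every member of $\fr$ omits all three functions $a_1,a_2,a_3$. Indeed, given $z\in D^{*}$, by definition $f_0(z)\neq a_i(z)$ for $i=1,2,3$, and since $f$ and $f_0$ share $a_i$ we cannot have $f(z)=a_i(z)$ either. Hence on the domain $D^{*}$ the family $\fr$ satisfies $f(z)\neq a_i(z)$ for every $f\in\fr$, every $z\in D^{*}$ and $i=1,2,3$, so Lemma~\ref{ChangT 09} yields that $\fr$ is normal on $D^{*}$. Since $E$ has no accumulation point in $D$, the definition of quasi-normality is met with exceptional set $E$, and $\fr$ is quasi-normal in $D$.

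I do not expect any serious obstacle in this argument: the sharing hypothesis converts almost tautologically into the omission statement needed to apply Lemma~\ref{ChangT 09}, once one member of the family is singled out. The only mild subtlety is the degenerate case $f_0\equiv a_i$, which would spoil the discreteness of $E$ if left unaddressed but is disposed of at the outset by noting that sharing then collapses the entire family to the function $a_i$.
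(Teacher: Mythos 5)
Your proposal does not address the statement at hand. The statement to be proved is Lemma \ref{ChangT 09} itself --- the omission criterion of Chang, Fang and Zalcman asserting that a family of meromorphic functions omitting three distinct meromorphic functions is normal. Your argument takes that criterion as given and uses it to deduce Theorem \ref{Main2 AnnPolo}; as a proof of the lemma it is therefore circular, and it contains no step that would extract normality from the omission hypothesis. A genuine proof of the lemma needs different machinery: one assumes non-normality, rescales via Zalcman's Lemma, and must then deal with the points where two or all three of the $\alpha_i(z)$ coincide (this is where the real work in \cite{Chang 09} lies, and why the conclusion cannot simply be read off pointwise from Montel's theorem). Note also that the paper offers no proof of this lemma --- it is quoted from \cite{Chang 09} --- so there is no internal argument to compare against; but that does not make an argument which presupposes the lemma into a proof of it.

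Read instead as a proof of Theorem \ref{Main2 AnnPolo}, your reduction is correct and in fact tighter than the paper's: fixing $f_0\in\fr$, disposing of the degenerate case $f_0\equiv a_i$, observing that the shared set $E=\bigcup_{i=1}^{3}\{z\in D: f_0(z)=a_i(z)\}$ is discrete, and applying Lemma \ref{ChangT 09} once on the domain $D\setminus E$ yields quasi-normality immediately. The paper argues locally at each point of $D\setminus E$ via a Zalcman rescaling, Hurwitz and Picard, reserving Lemmas \ref{ChangT 09} and \ref{ChangL 09} for the points where the values $a_j(z_0)$ coincide --- a case analysis your global application avoids, since the lemma only requires the $\alpha_i$ to be distinct as functions, not pointwise. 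But none of this discharges the lemma you were actually asked to prove.
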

\begin{lemma}[\cite{Chang 09}]\label{ChangL 09}
Let $\fr$ be a family of meromorphic functions on  $\Delta$ and let $\alpha_1$, $\alpha_2$  be distinct holomorphic functions on $\Delta$. Suppose that  for each $f\in\fr$ and $z\in D$, $f(z)\neq \alpha_i(z),$ for all $i=1, 2$. If $\fr$ is normal in $\Delta'=\{z: 0<|z|<1\}$, then $\fr$ is normal in $D$.

\end{lemma}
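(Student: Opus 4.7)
The plan is to reduce to Montel's theorem via the M\"obius substitution
\begin{equation*}
\Phi_n(z):=\frac{f_n(z)-\alpha_1(z)}{f_n(z)-\alpha_2(z)},
\end{equation*}
which converts the pair of omitted holomorphic functions $\alpha_1,\alpha_2$ into the omitted values $0$ and $\infty$. Given a sequence $\{f_n\}\subseteq\fr$, the hypothesis that $\fr$ is normal on $\Delta'$ yields (after passing to a subsequence) a spherically and locally uniformly convergent limit $f_n\to F$ on $\Delta'$, and it suffices to extract a further subsequence that converges on a disk around $0$.

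First I verify that each $\Phi_n$ is holomorphic on the whole disk $\Delta$ and omits $0$ there. Since $f_n\neq\alpha_2$, the denominator never vanishes at finite points; at a pole $z_0$ of $f_n$ the leading Laurent terms of $f_n-\alpha_1$ and $f_n-\alpha_2$ coincide, so $\Phi_n$ has a removable singularity at $z_0$ with value $1$. Because $f_n\neq\alpha_1$ the numerator also never vanishes, so $\Phi_n$ is a zero-free holomorphic function on $\Delta$.

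Assume first that $F\not\equiv\alpha_2$. By Hurwitz's theorem applied to the zero-free sequence $f_n-\alpha_2$, the limit $F-\alpha_2$ has no zeros on $\Delta'$, so $\Phi:=(F-\alpha_1)/(F-\alpha_2)$ is holomorphic on all of $\Delta'$ (with value $1$ at any pole of $F$, and equal to the constant $1$ when $F\equiv\infty$). Fix $0<r<1$; continuity of $\Phi$ on the compact circle $\{|z|=r\}$ gives a Euclidean bound, and the spherical uniform convergence $\Phi_n\to\Phi$ there yields $|\Phi_n|\leq M$ on $\{|z|=r\}$ for all large $n$. The maximum modulus principle, applied to the holomorphic function $\Phi_n$ on the closed disk $\{|z|\leq r\}$, propagates this bound to the whole disk, so $\{\Phi_n\}$ is uniformly bounded on $\{|z|\leq r\}$. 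Montel's theorem now extracts a subsequence $\Phi_n\to\tilde\Phi$ on $\{|z|<r\}$. Solving the substitution for $f_n$ gives $f_n=(\alpha_2\Phi_n-\alpha_1)/(\Phi_n-1)$, so if $\tilde\Phi\not\equiv 1$ then $f_n$ converges spherically on $\{|z|<r\}$, while if $\tilde\Phi\equiv 1$ then $f_n\to\infty$ spherically there. In either case a convergent subsequence on a neighborhood of $0$ has been produced.

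The symmetric case $F\equiv\alpha_2$ is handled by working instead with $\Psi_n:=(f_n-\alpha_2)/(f_n-\alpha_1)$, the roles of $\alpha_1$ and $\alpha_2$ swapped; then $\Psi_n\to 0$ on $\Delta'$, the maximum modulus argument forces $\Psi_n\to 0$ uniformly on $\{|z|\leq r\}$, and consequently $f_n\to\alpha_2$ on that disk. The main technical step is the holomorphicity check for $\Phi_n$ across the poles of $f_n$; once that is in place, the proof is a routine combination of Hurwitz's theorem, the maximum modulus principle, and Montel's theorem, with the normality hypothesis on $\Delta'$ supplying the crucial boundary estimate that drives the maximum principle.
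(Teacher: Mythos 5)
The paper itself gives no proof of this lemma; it is quoted from Chang, Fang and Zalcman \cite{Chang 09}, so there is nothing internal to compare against. Taken on its own, your strategy --- pass to the zero-free holomorphic functions $\Phi_n=(f_n-\alpha_1)/(f_n-\alpha_2)$, use the assumed convergence on $\Delta'$ to bound them on a circle $\{|z|=r\}$, push the bound inward with the maximum principle, and apply Montel --- is sound, and your verification that $\Phi_n$ is holomorphic and zero-free across the poles of $f_n$ is correct, as is your handling of the degenerate limit $F\equiv\alpha_2$.

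There is, however, a genuine gap at the final step, where you invert the substitution via $f_n=(\alpha_2\Phi_n-\alpha_1)/(\Phi_n-1)$. ``Distinct holomorphic functions'' means only $\alpha_1\not\equiv\alpha_2$; they may agree at isolated points. If $\alpha_1(0)=\alpha_2(0)$, then $\Phi_n(0)=1$ for every $n$, hence $\tilde\Phi(0)=1$, and the map $v\mapsto(\alpha_2(0)v-\alpha_1(0))/(v-1)$ is the indeterminate form $0/0$ at exactly the value $v=1$ occurring at $z=0$. So convergence of $\Phi_n$ does not transfer to convergence of $f_n$ at the one point, $z=0$, where normality is actually at stake --- and this is precisely the configuration in which the present paper invokes the lemma, since it applies it after reducing to the case $a_1(z_0)=a_2(z_0)$ on a deleted neighbourhood of $z_0$. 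The gap is repairable inside your own framework: work with the holomorphic functions $h_n=1/(f_n-\alpha_2)$ (poles of $f_n$ become zeros of $h_n$), which coincide with $(\Phi_n-1)/(\alpha_2-\alpha_1)$ off the zeros of $\alpha_2-\alpha_1$. Choosing $r'<r$ with $\alpha_1\neq\alpha_2$ on $\{|z|=r'\}$, the $h_n$ converge uniformly on that circle, hence (maximum principle applied to $h_n-h_m$) uniformly on $\overline{D}(0,r')$ to a holomorphic $h$; then $f_n=\alpha_2+1/h_n$ converges spherically there, to $\infty$ if $h\equiv0$ and to $\alpha_2+1/h$ otherwise. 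With that replacement the argument covers all cases, including the one the paper actually needs.
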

\begin{proof}[Proof of Theorem \ref{Main2 AnnPolo}]
  Since normality is a local property without loss of generality we may choose $D:=\Delta$. By the assumption, for each $j\in\{1, 2, 3\},$  the set $X=\cup_{j=1}^{3}\{z\in D: f(z)-a_j(z)=0\}$ does not depend on the mapping $f\in\fr.$

Clearly, $X$ is an isolated set in $D$ otherwise $f\equiv a_j$ for one of the $j\in\{1, 2, 3\}$. Also, for any fixed point $z_0\in D\setminus X$, there exists an open neighborhood $N_{\epsilon}(z_0)$ in $D\setminus X$ such that for all $n\geq1$
  \begin{equation}\label{eq2 AnnPolo}
    X \cap N_{\epsilon}(z_0)=\emptyset.
  \end{equation}
  We now prove that $\left\{f_n|_{N_{\epsilon}(z_0)}\right\}$ is a normal family on $N_{\epsilon}(z_0)$. \\

  Assume that $\left\{f_n|_{N_{\epsilon}(z_0)}\right\}$ is not normal in $N_{\epsilon}(z_0)$, then by Zalcman's lemma, there exist a subsequence  $\left\{f_n|_{N_{\epsilon}(z_0)}\right\}$ (after renumbering), a sequence of points $\{z_n\}\subset N_{\epsilon}(z_0)$ such that $\{z_n\}\rightarrow z_0$ and a sequence of positive real numbers $\{\rho_n\}\rightarrow0$ such that the sequence\begin{equation*}
           g_n(\xi)=f_n(z_n+\rho_n\xi)                                                                                                 \end{equation*}
           converges uniformly on compact subsets of $\C$ to a non-constant meromorphic function $g:\C\rightarrow\C$.

         Now, since $f_n(z_n+\rho_n\xi)-a_j(z_n+\rho_n\xi)\neq 0$, thus by Hurwitz's theorem, for each $j\in\{1, 2, 3\}$ we have either $g(\xi)\neq a_j(z_0), $ for all $\xi\in\C$ or $g_j(\xi)\equiv a_j(z_0),$ for all $\xi\in\C$. If $g_j(\xi)\equiv a_j(z_0)$ for one of the $j\in\{1, 2, 3\}$. Then $g$ is constant. Otherwise $g_j(\xi)\neq a_j(z_0)$ for all $j\in\{1, 2, 3\}.$ If all $a_j(z_0), j=1, 2, 3$ are distinct by Picard's theorem $g$ is constant. Otherwise we have two cases to consider

         When two of $a_j(z_0)$, $j=1, 2, 3$ are equal. Without any loss of generality we may assume that $a_1(z_0)=a_2(z_0)$ and $a_3(z_0)$ is distinct from $a_j(z_0)$ for $j=1, 2$. Since for all $f\in\fr$,  $z\in B(z_0, \epsilon)$ and $j\in\{1, 2, 3\}$,    $f(z)\neq a_j(z)$. Also $a_j$ are holomorphic in $D$, we get that $a_i(z)\neq a_j(z)$, $1\leq i, j\leq 3$ in the deleted neighborhood $N'_{\epsilon}(z_0)$ of $z_0$. Hence $\fr$ is normal in $N'_{\epsilon}(z_0)$. Then by Lemma \ref{ChangL 09} $\fr$ is normal in $N_{\epsilon}(z_0).$ The last case is when $a_1(z_0)=a_2(z_0)=a_3(z_0)$. Using Lemma \ref{ChangT 09}, in this case $\fr$ is normal in $N_{\epsilon}(z_0)$.

           Thus by the usual diagonal argument we can find a subsequence (again denoted by $\{f_n\}$) which converges uniformly on compact subsets of $D\setminus X$ to a meromorphic function $f$ of $D\setminus X$. Hence $\{f_n\}$ is quasi-normal in $D$.
\end{proof}

\end{document}